\def\smsh{\wedge}
\def\latex/{{\protect\LaTeX}}
\def\latexe/{{\protect\LaTeXe}}
\def\amslatex/{{\protect\AmS-\protect\LaTeX}}
\def\tex/{{\protect\TeX}}
\def\amstex/{{\protect\AmS-\protect\TeX}}
\def\bibtex/{{Bib\protect\TeX}}
\def\makeindx/{\textit{MakeIndex}}
\theoremstyle{plain} 
\newtheorem{thm}{Theorem}[section]
\newtheorem{prop}[thm]{Proposition}
\newtheorem{cor}[thm]{Corollary}
\newtheorem{defn}[thm]{Definition}
\newtheorem{rmk}[thm]{Remark}
\theoremstyle{definition}
\newtheorem{eg}[thm]{Example}
\numberwithin{equation}{section}
\numberwithin{equation}{section}
\newcommand{\CC}{\mathbb{C}}
\newcommand{\ZZ}{\mathbb{Z}}
\newcommand{\QQ}{\mathbb{Q}}
\newcommand{\Q}{\mathbb{Q}}
 \DeclareMathOperator{\Tor}{Tor}
\DeclareMathOperator{\Ext}{Ext}
\DeclareMathOperator{\len}{length}
 \DeclareMathOperator{\Supp}{Supp}
 \DeclareMathOperator{\Spec}{Spec}
 \DeclareMathOperator{\Sing}{Sing}
 \DeclareMathOperator{\Proj}{Proj}
\DeclareMathOperator{\ch}{char}
 \DeclareMathOperator{\pd}{pd}
 \DeclareMathOperator{\height}{height}
 \DeclareMathOperator{\coker}{coker}
\DeclareMathOperator{\ds}{\displaystyle}
\def\Tor{\textnormal{Tor}}
\def\Proj{\textnormal{Proj}}
\def\Spec{\textnormal{Spec}}
\def\alf{\otimes_{A}L}
\def\alff{\otimes_{A}L'}
\def\ki{\otimes_{L'}L}
\def\bet{\otimes_{A}E}
\def\sig{\otimes_{A}A'}
\def\ro{\otimes_{A'}L}
\def\Supp{\textnormal{Supp}}
\def\chr{\textnormal{char}}
\def\Sing{\textnormal{Sing}}
\def\H{\textnormal{H}}
\def\dm{\textnormal{dim}}
\def\htt{\textnormal{ht}}
\def\pd{\textnormal{pd}}
\def\fm{{\mathfrak m}}
\def\fn{{\mathfrak n}}
\def\len{\textnormal{length}}
\def\fp{{\mathfrak p}}
\def\ds{\displaystyle}
\def\map#1{\buildrel #1 \over \longrightarrow}
\begin{document}

\title{Hochster's  Theta Pairing and Algebraic Equivalence}
\author{Olgur Celikbas and Mark E. Walker}

\address{Department of Mathematics, University of Kansas, 645 Snow Hall, 1460 Jayhawk
Blvd, Lawrence, KS 66045-7523, USA}
\email{ocelikbas@math.ku.edu}

\address{Department of Mathematics, University of Nebraska--Lincoln, 328 Avery Hall,
Lincoln, NE 68588--0323, USA}
\email{mwalker@math.unl.edu}

\subjclass[2000]{13D03}

\keywords{Rigidity of Tor, theta pairing, hypersurface singularities}

\begin{abstract}
We define a variant of Hochster's $\theta$ pairing and prove that it is constant in flat
  families of modules over hypersurfaces with isolated
  singularities. As a consequence, we show that the $\theta$ pairing
  factors through the Grothendieck group modulo algebraic
  equivalence. Moreover, our result allows us, in certain situations,
  to translate the properties of the $\theta$ pairing in
  characteristic zero (established in  \cite{MPSW} and \cite{PV}) to the
  characteristic $p$ setting. We also give an application of our
  result to the rigidity of $\Tor$ over hypersurfaces.
\end{abstract}

\maketitle

\section{Introduction}

Let $R$ be the ring of regular functions of a hypersurface with
isolated 
singularities --- i.e., $R = S/f$ where $S$ is regular and the set
$$
\Sing(R) = \{ p \in \Spec(R) \, | \, \text{$R_p$ is not regular}\}
$$ 
is a finite
set of maximal ideals. Then for any
pair of finitely generated $R$-modules $M$ and $N$, for all $i \gg 0$, the Tor modules
$\Tor_i^R(M,N)$ have finite length and are periodic of period at most two (i.e.,
$\Tor_i^R(M,N) \cong \Tor_{i+2}^R(M,N)$).
Under these assumptions, Hochster's theta pairing \cite{Ho1} is defined as 
$$
\theta^{R}(M,N)=\len(\Tor^{R}_{2i}(M,N))-\len(\Tor^{R}_{2i+1}(M,N))
\text{ for  $i\gg 0$.}
$$
The $\theta$ pairing is additive on short exact sequences in each
argument, and thus determines a $\ZZ$-valued pairing on $G(R)$, the Grothendieck
group of finitely generated $R$-modules. One looses no information by
tensoring with $\QQ$, and so often $\theta$ is interpreted as a
symmetric bilinear form on the rational vector space $G(R)_\QQ = G(R) \otimes \QQ$.

Hochster \cite{Ho1} originally defined the theta pairing (for a larger
class of pairs of modules) to study the
direct summand conjecture. Recently the
$\theta$ pairing was examined in detail in \cite{Da1}, \cite{Da2}
and \cite{MPSW}. 
Dao \cite[2.8]{Da1} observed an important connection of the
$\theta$ pairing with the rigidity of $\Tor$: Assume $(R, \fm)=S/(f)$ where $S$ is an unramified (or
equi-characteristic) regular local ring and $R_\fp$ is regular for all
$\fp \neq \fm$. Then the vanishing of
$\theta^{R}(M,N)$ implies that the pair of finitely generated $R$-modules
$(M,N)$ is ``$\Tor$-rigid'' in the sense that if $\Tor^{R}_{n}(M,N)=0$ for
some non-negative integer $n$, then $\Tor^{R}_{i}(M,N)=0$ for all
$i\geq n$. (We will record a slightly more general version of this
fact in Proposition \ref{Dao}.)

In \cite{MPSW}, Moore et. al. study the $\theta$ pairing for 
rings of the form
$$
\displaystyle{R =  k[x_0, \dots, x_n]/f(x_0, \dots, x_n)},
$$
where $k$ is an algebraically closed field and $f$ is a homogeneous
polynomial of degree $d$  such that $X := \Proj(R)$ is a smooth $k$-variety. (So, $\fm =
(x_0, \dots, x_n)$ is the only non-smooth point of $R$.) 
They show that the $\theta$ pairing is induced, via the Chern character map, from
the pairing on the primitive part of $\ell$-adic \'etale  cohomology 
$$
\frac{H^{\frac{n-1}{2}}_{et}(X, \QQ_\ell)}{\QQ_\ell \cdot \gamma^{\frac{n-1}{2}}}
\times 
\frac{H^{\frac{n-1}{2}}_{et}(X, \QQ_\ell)}{\QQ_\ell \cdot
  \gamma^{\frac{n-1}{2}}} \to \QQ_\ell
$$
given by
$$
(a, b) \mapsto 
\left(\int_X a \cup \gamma^{\frac{n-1}{2}}\right) 
\left(\int_X b \cup \gamma^{\frac{n-1}{2}}\right) - d \int_X a \cup b.
$$
(Here, $\ell$ is any prime distinct from
$\ch(k)$ and  $\gamma$ is the class of a
hyperplane section.) 
In particular, the theta
pairing vanishes for rings $R$ of
this type having even dimension (i.e., for which $n$ is even). 
Moreover, in the odd dimensional case, when $\ch(k) = 0$, using the
Hodge-Riemann bilinear relations, they show that the above
pairing on \'etale cohomology is positive/negative definite depending
whether $n$ is congruent to $3$ or $1 \pmod{4}$.
In particular, $(-1)^{\frac{n+1}{2}} \theta$ is
semi-positive definite in this case.

Closely related to the $\theta$ pairing is the Herbrand difference
$h(M,N)$, introduced by Buchweitz \cite{Bu}, which is defined by replacing
$\Tor$ with $\Ext$ in the formula defining $\theta$. 
Under mild hypotheses, these two invariants are essentially the same: If
$R$ is finite type over a field, then 
upon tensoring with $\Q$, each pairing is induced from a pairing on
the graded (rational) Chow group, $CH^\cdot(R)_\Q$, of $R$. On
the summand $CH^j(R)_\Q$, the
$\theta$ pairing and the Herbrand
difference agree when $j$ is even, and they differ by a
sign when $j$ is odd.

When $R$ has the form $\CC[[x_0, \dots,
x_n]]/f$ with $f$ a power series with an isolated singularity, the
Herbrand difference coincides with Euler characteristic for the
category of matrix factorizations of $f$. Recently, Polishchuk and Vaintrob
\cite[Theorem 4.1.4]{PV} have established a
``Riemann-Roch'' formula of the form
$$
h(M,N) = \langle ch(M), ch(N) \rangle
$$
for maximum Cohen-Macaulay $R$-modules $M$ and $N$.
Here, $ch(M)$ is the ``Chern character'' of $M$, which is a certain
element in the Milnor algebra of $f$:
$$
\frac{\CC[[x_0, \dots, x_n]]}{\left(\frac{\partial f}{\partial x_0}, \dots,
\frac{\partial f}{\partial x_n}\right)}.
$$
The pairing $\langle -,-
\rangle$ on the Milnor algebra is given by the formula involving
generalized fractions
$$
\langle g,h \rangle 
=
Res 
\left[
gh \, dx_0 \smsh \cdots \smsh dx_n 
\over
\frac{\partial f}{\partial x_0}, \dots, 
\frac{\partial f}{\partial x_n}
\right].
$$
The class $ch(M)$ is given in terms of a matrix factorization of $f$
that presents $M$, and is defined in terms of  the ``boundary-bulk
map'', a construction arising in mathematical physics. In
particular, $h(M,N)$ may be computed directly using the calculus of
generalized fractions from the entries of the matrix factorization
representations of $M$ and $N$. Moreover,
since the boundary-bulk map vanishes when $n$ is even, this result shows
that the $h$ pairing, and hence the $\theta$ pairing,
vanish for even dimensional rings $R$ of this type.
This exciting connection 
between homological algebra of hypersurfaces and mathematical physics is only beginning
to be explored.

The main point of this article is to establish the invariance of the
$\theta$ pairing in flat families --- see Theorem \ref{main} for a
precise statement. For example, suppose $A$ is a complete dvr, $f$
belongs to $C := A[x_0,
\dots, x_n]$, and $B := C/f$ is flat over $A$ and its singular locus
is module finite over $A$. Suppose also that $M$ and $N$ are a pair of finitely generated
$B$-modules that are $A$-flat. Then given a map $A \to L$ with $L$  
an algebraically closed field, the integer $\theta^{B \otimes_A L}(M
\otimes_A L, N \otimes_A L)$ is independent of $L$ (and the map $A \to
L$). 

In particular, taking $A$ to be the $p$-adic integers, our result
allows us to translate, in certain situations, results in characteristic
$0$, such as those listed above,  to results in characteristic
$p$ --- see Corollary \ref{C3} for precise statements of this sort. In the equi-characteristic setting, if we take $A$ to be the ring
of regular functions of a
smooth affine curve  over an algebraically closed field $k$, then our
results imply that the $\theta$ pairing does not distinguish between
algebraically equivalent elements of $G(R)$. In other words, $\theta$ descends to a 
pairing on the Grothendieck group of $R$ modulo algebraic equivalence --- see
Corollary \ref{C2} for a
precise statement.

\section{Main Theorem} \label{MTsection}

Throughout this paper, all rings are assumed to be commutative and Noetherian.
We give some definitions used in our main theorem.

\begin{defn} Let $R$ be a ring. An $R$-algebra $S$ is
  {\em essentially of finite type} over $R$ if $S$ is a localization
  of a finitely generated $R$-algebra. $S$ is said to be
  {\em essentially smooth} over $R$ if (i) $S$ is essentially of finite
  type over $R$, (ii) the map $R \to S$ is flat, and (iii) for all ring maps $R \to L$ where $L$ is a field, the ring $L\otimes_{R}S$ is regular.
\end{defn}

For example, if $k$ is a field and $S$ is a smooth
$k$-algebra --- that is, 
$$
S = k[x_1, \dots,x_n]/(f_1, \dots, f_m)
$$ 
and the Jacobian matrix $\ds{\left( \frac{\partial
f_i}{\partial x_j} \right)\otimes_{S}S/\fm}$ has rank equal to
$\ds{n-\dm(S_\fm)}$ for each maximal ideal $\fm$ of $S$ --- then any localization of $S$ is essentially smooth over $k$.

\begin{defn} \label{D1}
A {\em flat family of  hypersurfaces with isolated singularities}
consists of
  a base ring $A$, an $A$-algebra $C$, and an element $f \in C$ such
  that, letting $B := C/(f)$, the following conditions hold:
\begin{enumerate}
\item $\Spec(A)$ is connected,
\item $C$ is essentially smooth over $A$,
\item $B$ is flat over $A$, and
\item there exists an ideal  $I$ of $B$ such that $B/I$ is module finite over $A$ and
$B_\fp$ is essentially smooth over $A$ for all $\fp \in \Spec(B)
  \setminus V_B(I)$.
\end{enumerate}
\end{defn}

\begin{rmk} \label{R1}
For the ideal $I$ in Definition \ref{D1}, $V(I)$  contains the singular locus of the map
$A \to B$, and so the assumption that $A \to B/I$ is module finite
implies that $A \to B$ is a family of isolated singularities. In more
detail, 
given $A \to L$ with $L$  a field, 
for $q \in \Spec(B\otimes_{A}L)$, let $p = q \cap B$. Then 
$(B\otimes_{A}L)_{q}$ is a localization of $(B\otimes_{A}L)_{p}$
and $(B\otimes_{A}L)_{p}\cong B_{p}\otimes_{A}L$.  In particular, if
$p$ is a regular point of $\Spec(B)$, then $q$ is a regular point of
$\Spec(B \otimes_A L)$. 
It follows that $\ds{\Sing(B\otimes_{A}L) \subseteq
  V_{B\otimes_{A}L}(J)}$, where $J := I \cdot (B \otimes_A L)$.
Since 
$\ds{(B\otimes_{A}L)/J}  \cong (B/I) \otimes_A L$ is a finite
dimensional $L$-algebra, $\Sing(B\otimes_{A}L)$ is a finite set of
maximal ideals.
Also notice that 
$B \otimes_A L = (C \otimes_A L)/(f)$ and $C \otimes_A L$ is regular,
by assumption, and hence
$B \otimes_A L$ is a hypersurface with isolated singularities.
\end{rmk}

\begin{eg}  Suppose $A$ is a complete dvr (e.g., the $p$-adic
  integers) and $C = A[x_0, \dots, x_n]_{(x_0, \dots,
    x_n)}$. 
Say $f \in C$ is such that
  $(\frac{\partial f}{\partial x_0}, \cdots, \frac{\partial
      f}{\partial x_n})$ (where $\frac{\partial -}{\partial x_i}$ is
    the evident $A$-linear derivation of $C$) contains $(x_0^N, \dots, x_n^N)$ for some $N
    \gg 0$. For example,
  $f = x_0^{d_0} + \cdots + x_n^{d_n}$ satisfies this condition
  provided the integers $d_0, \dots, d_n$ are invertible in $A$. Then
  $A,C$ and $f\in C$ form a flat family of hypersurfaces with isolated singularities. The required
  ideal $I$ may be taken to be
$(\frac{\partial f}{\partial x_0}, \cdots, \frac{\partial
      f}{\partial x_n})$.
\end{eg}

\begin{defn} \label{theta} For a flat family of hypersurfaces with isolated
  singularities as in Definition \ref{D1},
  if $M$ and $N$ are finitely generated $B$-modules that are $A$-flat
  and $\alpha: A \to L$ is a ring map with $L$ a field, 
we define
$$
\begin{aligned}
\widetilde{\theta}_{\alpha}(M,N) :=
\dim_{L}  & \left(\Tor_{2i}^{B\otimes_{A}L}(M\otimes_{A}L,  N\otimes_{A}L)\right)
- \\
&
\dim_{L}\left(\Tor_{2i-1}^{B\otimes_{A}L}(M\otimes_{A}L,N\otimes_{A}L)\right) 
\,  \text{ for $i \gg 0$.}
\\
\end{aligned}
$$
\end{defn}

Since $B \otimes_A L$ is a hypersurface, we have
$$
\ds{\Tor^{B\otimes_{A}L}_{i}(M\otimes_{A}L,N\otimes_{A}L)\cong
  \Tor^{B\otimes_{A}L}_{i+2}(M\otimes_{A}L,N\otimes_{A}L)}
$$ 
for all
$i\gg 0$. 
Moreover, since 
$\Sing(B\otimes_{A}L)  \subset V_{B \otimes_AL}(I \cdot (B  \otimes_A
L))$ and $(B \otimes
_A L)/I \cdot(B \otimes_A L)$ is finite dimensional over $L$ by Remark \ref{R1}, it
follows that 
$\ds{\Tor^{B\otimes_{A}L}_{i}(M\otimes_{A}L,N\otimes_{A}L)}$ 
is supported on
$V(I \cdot(B \otimes_A L))$
and is finite dimensional over $L$, for all $i\gg 0$. In
particular, $\ds{\widetilde{\theta}_{\alpha}(M,N)}$ is well-defined.

\begin{rmk} \label{rmk2}
The pairing $\widetilde{\theta}_\alpha$ is closely related to the
$\theta$ pairing. For observe that if $L$ is algebraically closed,
then the residue of every prime in the Artinian ring $(B \otimes_A
L)/I \cdot (B \otimes_A L)$ is isomorphic to $L$ (by 
Nullstellensatz). Since
$\ds{\Tor^{B\otimes_{A}L}_{i}(M\otimes_{A}L,N\otimes_{A}L)}$ is
supported on $V(I \cdot(B \otimes_A L))$ for all $i \gg 0$, it follows
that 
$$
\dim_L \Tor^{B \otimes_A L}_i(M \otimes_A L, N \otimes_A L) = 
\len_{B \otimes_A L} \Tor^{B \otimes_A L}_i(M \otimes_A L, N \otimes_A L)
$$
for all $i \gg 0$, and hence
$$
\widetilde{\theta}_{\alpha}(M,N) = \theta^{B \otimes_A L}(M
\otimes_A L, N \otimes_A L)
$$
for such $L$. Typically, for questions involving $\theta$, passage
along a faithfully flat extension looses no information, and thus
one may reduce to a situation in which the residue fields are
algebraically closed.
\end{rmk}

The main result of this paper is the following. (See  Definitions
\ref{D1} and \ref{theta} for the terminology.)

\begin{thm}\label{main} Let $A$, $C$, and $f \in C$ be a flat
  family of hypersurfaces with isolated singularities. Set $B = C/f$ and let $M$ and $N$ be
  finitely generated $B$-modules both of which are flat as
  $A$-modules. If $\alpha: A \to L$ is a ring map with $L$ a field, then 
$\ds{\widetilde{\theta}_{\alpha}(M,N)}$
is independent of $L$ and $\alpha$.
\end{thm}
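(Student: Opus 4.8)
The plan is to reduce the theorem to a statement about the behavior of the Tor modules as one varies the point of $\Spec(A)$, and then to exploit the rigidity of matrix factorizations together with semicontinuity. First I would observe that since $\Spec(A)$ is connected, it suffices to show that $\widetilde\theta_\alpha(M,N)$ is locally constant on $\Spec(A)$ in a suitable sense; more precisely, for a fixed prime $\fq \in \Spec(A)$ it is enough to compare the value at the residue field $\kappa(\fq)$ with the value at a generic point of $A/\fq$, and to handle specialization from the generic point. By base-changing $A$ along $A \to A_\fq$ and then completing (which does not affect any of the hypotheses in Definition \ref{D1}, as the singular locus is cut out by the explicit ideal $I$), one reduces to the case in which $A$ is a complete local ring, and then — passing to a coefficient ring or a one-dimensional quotient — to the case in which $A$ is a complete DVR. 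Here the two maps $\alpha$ to be compared are $A \to A/\fm_A = k$ (the closed fiber) and $A \to \operatorname{Frac}(A) = K$ (the generic fiber).

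The key structural input is that for $i \gg 0$ the complex computing $\Tor^B_\bullet(M,N)$ is, after localizing at the singular locus, governed by a matrix factorization of $f$ over $C$. Concretely, since $M$ is a finitely generated $B$-module that is $A$-flat, a high truncation of a $B$-free resolution of $M$ is $2$-periodic and comes from a matrix factorization $(\varphi,\psi)$ of $f$ with entries in $C$; the flatness of $M$ over $A$ ensures $(\varphi,\psi)$ reduces to a matrix factorization of $f \otimes_A L$ over $C \otimes_A L$ for every $\alpha$, and this reduced pair still presents $M \otimes_A L$ as a $B \otimes_A L$-module. Tensoring this $2$-periodic complex with $N$ and reducing mod $\fq$ gives, for each $i \gg 0$, a finite complex of finitely generated $A$-modules whose homology is $\Tor^{B\otimes_A L}_i(M\otimes_A L, N \otimes_A L)$ after base change to $L$; by generic freeness one may, after inverting a single nonzero element of $A$, assume these $\Tor$ modules are themselves free (hence flat) over $A$ in the relevant degrees. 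For the finitely many bad primes dividing that element, one argues separately by localizing further. Then $\dim_L \Tor_i$ is computed by tensoring a complex of free $A$-modules with $L$, and the alternating sum $\widetilde\theta$ is the alternating sum of the ranks of these free modules — which is manifestly independent of $L$ and $\alpha$.

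The main obstacle is the ``generic freeness'' step: a priori the individual Tor modules need not be flat over $A$, because the homology of the relevant complex can jump at special primes, and the periodicity isomorphism $\Tor_i \cong \Tor_{i+2}$ only holds in the fiber, not over $A$. The resolution is to use that the truncated complex is, up to the finitely many singular points, a complex of projective $B$-modules supported in $V_B(I)$, with $B/I$ module-finite over $A$; after base change to the generic point $K$, $B \otimes_A K$ is a hypersurface with isolated singularities and the $\Tor$ are $K$-finite and eventually $2$-periodic, so their ranks over $A$ (at the generic point) are well-defined, and one has only to rule out a drop or an extra cohomology class upon specialization to $k$. This is where one invokes that $B \otimes_A k$ is also a hypersurface with isolated singularities (Remark \ref{R1}) together with upper-semicontinuity of fiber dimension of homology: the alternating sum is the Euler characteristic of a bounded complex of $A$-flat (after inverting one element) modules, hence constant, and the finitely many exceptional primes are disposed of by a Noetherian induction on $\Spec(A)$, shrinking to a dense open on which everything is flat and then treating the closed complement, which again satisfies all hypotheses of Definition \ref{D1} with $A$ replaced by a quotient. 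Assembling these pieces, using connectedness of $\Spec(A)$ to propagate the equality across all of $\Spec(A)$, completes the argument.
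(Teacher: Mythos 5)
Your opening reduction is essentially the paper's: one passes to adjacent primes $p \subseteq q$ in $\Spec(A)$, then replaces $A$ by a discrete valuation ring (the paper takes a localization of the normalization of $A_q/pA_q$; you suggest completion and a coefficient ring, which is slightly heavier but also fine) so that the two points to be compared are the generic and closed points. The observation that a high truncation of a free resolution of $M$ over $B$ comes from a matrix factorization and reduces well mod $A$ is also correct and consistent with the paper.

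The gap is in the last step --- the actual comparison of the generic fiber with the closed fiber of the DVR. Generic freeness only produces freeness of the $\Tor$ modules after inverting a nonzero element of $A$; over a DVR this inverts the uniformizer, leaving you with the generic fiber alone, which tells you nothing about the closed fiber. Your proposed Noetherian induction (``shrink to a dense open, then treat the closed complement with $A$ replaced by a quotient'') does not close this gap: once $A$ is a DVR, the dense open is the generic point and the closed complement is the closed point, and running the argument on each separately shows constancy on each (a vacuous statement for a single point) but never compares them. The ``Euler characteristic of a bounded complex of $A$-flat modules is constant'' principle also does not apply as stated: $\widetilde\theta_\alpha$ is an alternating sum of $\dim_L\Tor_{2i}$ and $\dim_L\Tor_{2i-1}$ in two consecutive \emph{high} degrees of an \emph{unbounded}, eventually $2$-periodic complex, and truncating to those two degrees does not produce a complex whose homology is those $\Tor$ modules. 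What is actually needed --- and what the paper supplies --- is the long exact sequence obtained from $0 \to P_\bullet \otimes_B N \xrightarrow{\pi} P_\bullet \otimes_B N \to (P_\bullet\otimes_B N)\otimes_A E \to 0$, together with the decomposition $\Tor^B_n(M,N) \cong A^{(r_n)} \oplus G_n$ into free and torsion parts. One then computes $\widetilde\theta_\alpha = r_{2j}-r_{2j-1}$ and $\widetilde\theta_\beta = (r_{2j}+t_{2j}+t_{2j-1})-(r_{2j-1}+t_{2j-1}+t_{2j-2})$, and the extra torsion contributions cancel precisely because $\Tor^B_n(M,N)\cong\Tor^B_{n+2}(M,N)$ for $n \gg 0$ (a consequence of $C$ being regular once $A$ is a DVR). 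Your sketch hints at a ``jump'' to be ruled out but never carries out this cancellation; without it, upper semicontinuity only bounds the individual $\Tor$ dimensions from below and does not pin down the alternating sum.
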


\begin{eg} Let $A = \ZZ$, $\ds{C=\ZZ[x_{1}, \dots, x_{n}, y_{1}, \dots,
    y_{n}]}$, $\ds{f=\sum_{i=1}^{n}x_{i}y_{i}}$, $M=B/(x_{1}, \dots,
  x_{n})$ and $N=B/(y_{1}, \dots, y_{n})$.
Then $A,C,f$ form a flat family of 
hypersurfaces with isolated singularities (for which $I$ may be taken to be $(x_0,
\dots, x_n, y_0, \dots, y_n)$). 
Our Theorem \ref{main} implies
$\ds{\widetilde{\theta}_{\alpha}(M,N)}$ is independent of $\alpha$. 
This can be checked directly: $\Tor_{i}^{B\otimes_{A}L}(M\otimes_{A}L,N\otimes_{A}L) \cong L$ if
$i \geq 0$ is even and
$\Tor_{i}^{B\otimes_{A}L}(M\otimes_{A}L,N\otimes_{A}L)=0$ if $i$ is
odd.
(See, for example, \cite[3.12]{CeD}.) Therefore $\widetilde{\theta}_{\alpha}(M,N)=1$ for all $\alpha$.
\end{eg}

\begin{proof}[Proof of Theorem \ref{main}]
Let $E$ be a field and let $\beta: A \rightarrow E$ be a ring map; 
it suffices to prove that 
$\displaystyle{\widetilde{\theta}_{\alpha}(M,N)=\widetilde{\theta}_{\beta}(M,N)}$.
Note that, for some $p\in \Spec(A)$, we have a commutative diagram of ring maps of the form:
$$\xymatrix{
  A\ar[dr]_{\alpha} \ar[r]^{\alpha'} & k(p) \ar[d]^{}  \\
 & L
  }$$
Moreover 
$$
\Tor^{B\alff}_{i}(M\alff,N\alff)\ki \cong
\Tor^{B\alf}_{i}(M\alf,N\alf)
$$ 
where $L'=k(p)$, and
hence $\widetilde{\theta}_{\alpha}(M,N)=\widetilde{\theta}_{\alpha'}(M,N)$.
Without loss of generality we may therefore assume $L=k(p)$ and
$E=k(q)$, for some $p, q\in \Spec(A)$. Since
$\Spec(A)$ is connected, we may also assume that $p\subseteq q$ and $\htt(q)=\htt(p)+1$.

Set $A'=\widetilde{A}_{\mathfrak {\widetilde{n}}}$ where
$\widetilde{A}$ is the integral closure of $A_{q}/pA_{q}$ in its field
of fractions and $\mathfrak {\widetilde{n}}$ is a maximal ideal of
$\widetilde{A}$. Then we have a commutative diagram of ring maps of the form:
$$
\xymatrix{
  A\ar[dr]_{\alpha} \ar[r]^{} & A' \ar[d]^{\rho}  \\
 & L
  }
$$
Moreover, the isomorphism
$$
\begin{aligned}
\Tor^{(B\sig)\ro}_{i} & \left((M\sig)\ro,(N\sig)\ro\right)
  \\
&  \cong  \Tor^{B\alf}_{i}(M\alf,N\alf)
\end{aligned}
$$
shows that 
$\widetilde{\theta}_{\alpha}(M,N)=\widetilde{\theta}_{\rho}(M\sig,N\sig)$.
Since the properties of being flat and essentially smooth are preserved under base
change, $A'$, $C\sig$,  $B\sig$, $M \sig$ and $N \sig$ satisfy the hypotheses of the Theorem.
Without loss of generality we may therefore assume $A$ is a dvr  with unique maximal ideal $\mathfrak{n}$, $L$ is the field of fractions
of $A$ and $E$ is the residue field $A/\mathfrak{n}$.

Set $\mathfrak n=(\pi)$ and fix a sufficiently large integer $j$.
We claim 
$$
\Supp_{B}(\Tor^{B}_{j}(M,N)) \subseteq V_{B}(I).
$$ 
Indeed, if $q' \notin V_B(I)$, then 
$B_{q'}$ is essentially smooth over $A$.
If $A \to B_{q'}$ is a local map, then
by \cite[23.7]{Mat}  $B_{q'}$ is regular. Otherwise, $\pi$ is
a unit in $B_{q'}$ and hence
$$
B_{q'}\otimes_{A}A\left[\frac{1}{\pi}\right]=
B_{q'}\left[\frac{1}{\pi}\right]=B_{q'}
$$ 
is regular. Either way, 
$\Tor^{B}_{j}(M,N)_{q'} = 0$.

It follows that $I^{l} \cdot \Tor^{B}_{j}(M,N)=0$ for some $l>0$. Using
the fact that $B/I$ is a finitely generated $A$-module, we see that
$\Tor^{B}_{n}(M,N)$ is a finitely generated $A$-module for all $n\gg
0$. Therefore, for all $n\gg 0$, we can write
\begin{equation} \label{E1}
\Tor^{B}_{n}(M,N)=A^{(r_{n})} \oplus G_{n}
\end{equation}
where $r_{n}$ is some integer and $\displaystyle{G_{n}}$ is the
torsion part of $\Tor^{B}_{n}(M,N)$. Let
$\displaystyle{t_{n}=\dim_{E}\left(\frac{G_{n}}{\pi
      G_{n}}\right)}$. Since $N$ is $A$-flat, we have a 
short exact sequence of complexes
\begin{equation} \label{E2}
0\rightarrow P_{\bullet}\otimes_{B}N
\stackrel{\pi}{\rightarrow} P_{\bullet}\otimes_{B}N \rightarrow
\left(P_{\bullet}\otimes_{B}N\right)\bet  \rightarrow 0.
\end{equation}
where $P_{\bullet} \stackrel{\sim}{\longrightarrow}M$ is a free
resolution of $M$ over $B$. Since $B$ is flat over $A$, $P_{\bullet}$
is also a flat $A$-resolution of $M$. Furthermore, as $M$ is $A$-flat,
$\H_{n}(P_{\bullet}\otimes_{A}E)=0$ for all $n\geq 1$.  Therefore
$P_{\bullet}\alf \stackrel{\sim}{\longrightarrow}M\alf$ is a free resolution of $M\alf$ over $B\alf$, and hence
\begin{equation} \label{E3}
\Tor^{B\alf}_{n}(M\alf,N\alf) \cong
\H_{n} \left( (P_{\bullet}\otimes_{B}N)\alf\right).
\end{equation}
Similarly, 
\begin{equation} \label{E4}
\Tor^{B\bet}_{n}(M\bet,N\bet) \cong
\H_{n} \left( (P_{\bullet}\otimes_{B}N)\bet \right).
\end{equation}
Now \eqref{E2} and \eqref{E4} yield the exact sequence
\begin{equation} \label{E5}
\Tor_{i}^{B}(M,N)\stackrel{\pi}{\rightarrow}\Tor_{i}^{B}(M,N)\rightarrow
\Tor^{B\bet}_{i}(M\bet,N\bet) \rightarrow \Tor_{i-1}^{B}(M,N).
\end{equation}
Notice \eqref{E3}  implies
$\displaystyle{\Tor^{B\alf}_{n}(M\alf,N\alf)\cong
\Tor^{B}_{n}(M,N)\alf}$ since $L$ is $A$-flat. Thus, by
\eqref{E1},  $\displaystyle{\Tor^{B\alf}_{j}(M\alf,N\alf)\cong
L^{(r_{j})}}$ and hence
$\displaystyle{\widetilde{\theta}_{\alpha}(M,N)=r_{2j}-r_{2j-1}}$.
We have, by \eqref{E5},   the short exact sequences
\begin{equation}  \label{E6}
0\rightarrow C_{i} \rightarrow \Tor^{B\bet}_{i}(M\bet,N\bet)
\rightarrow K_{i-1} \rightarrow 0
\end{equation}
where $K_{i}$ and $C_{i}$ are the kernel and cokernel of the multiplication by $\pi$ on $\Tor_{i}^{R}(M,N)$, respectively.
Now it follows from \eqref{E1} and \eqref{E6} that
$$
\widetilde{\theta}_{\beta}(M,N)=(r_{2j}+t_{2j}+t_{2j-1})-(r_{2j-1}+t_{2j-1}+t_{2j-2}).
$$
Since $\Tor^{B}_{n}(M,N)\cong \Tor^{B}_{n+2}(M,N)$ for all $n\gg 0$, we conclude that
$$
\widetilde{\theta}_{\alpha}(M,N)=r_{2j}-r_{2j-1}=\widetilde{\theta}_{\beta}(M,N).
$$
\end{proof}

\section{Some Consequences of the Main Theorem}

We explain how the main theorem implies that the $\theta$ pairing
factors through ``algebraic equivalence''. For this, it is helpful to
record the following simplified version of the theorem. 

\begin{cor} \label{C1}
Assume $k$ is a field, $S$ is essentially smooth over $k$, $f $ is an
element of $S$ such that the singular locus, $\Sing(R)$,  of $R := S/f$ is a finite
set of maximal ideals,
and that 
 $R/\fm \cong k$ for every $\fm \in \Sing(R)$ in the singular locus. (The latter
 condition holds, for
  example, if $k$ is algebraically closed and $R$ is
  finitely generated as a $k$-algebra). 
 If $A$ is a Noetherian $k$-algebra such that $\Spec(A)$ is connected
 and $M$ and $N$ are
  finitely generated $R\otimes_{k}A$-modules that are flat as
  $A$-modules, then 
$$
    \theta^{R\otimes_{k}k(q)}\left(M\otimes_{A}k(q), N\otimes_{A}
      k(q)\right)
$$ 
is independent of $q\in \Spec(A)$.
\end{cor}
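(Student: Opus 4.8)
The plan is to deduce Corollary \ref{C1} from Theorem \ref{main} by packaging the data $(k, S, f, A)$ into a flat family of hypersurfaces with isolated singularities in the sense of Definition \ref{D1}, with base ring $A$ rather than $k$. Concretely, I would set $C := S \otimes_k A$ and keep the same $f$ (viewed in $C$ via $S \to C$), so that $B := C/f = R \otimes_k A$. I then need to verify the four conditions of Definition \ref{D1} for the triple $(A, C, f)$. Condition (1) is exactly the hypothesis that $\Spec(A)$ is connected. Condition (3), that $B = R \otimes_k A$ is flat over $A$, is immediate since $R$ is a flat (even free as a module, being a $k$-vector space) $k$-module and base change of a flat module is flat. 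The content is in conditions (2) and (4).

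For condition (2) I must check that $C = S \otimes_k A$ is essentially smooth over $A$. Since $S$ is essentially smooth over $k$, it is essentially of finite type and flat over $k$, so $C = S \otimes_k A$ is essentially of finite type and flat over $A$; and for any ring map $A \to \Lambda$ with $\Lambda$ a field, the composite $k \to A \to \Lambda$ makes $\Lambda$ a field extension-like $k$-algebra, and $C \otimes_A \Lambda = S \otimes_k \Lambda$ is regular because $S$ is essentially smooth over $k$. (Here I am using that essential smoothness over $k$ is tested against all field-valued points $k \to \Lambda$, and $\Lambda$ is such a point.) For condition (4), let $I_0 \subseteq R$ be the (radical) ideal cutting out $\Sing(R)$, so $R/I_0$ is a finite-dimensional $k$-algebra, and set $I := I_0 \otimes_k A \subseteq B$. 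Then $B/I = (R/I_0)\otimes_k A$ is module-finite over $A$ (being a finite free $A$-module). It remains to check that $B_{\fp}$ is essentially smooth over $A$ for every $\fp \notin V_B(I)$. For such $\fp$, the contraction $\fp \cap R$ avoids $I_0$, hence $R_{\fp \cap R}$ is a regular local ring, in fact essentially smooth over $k$ (a localization of $S$ at a point not in $\Sing(R)$); then $B_\fp$ is a localization of $R_{\fp \cap R} \otimes_k A$, and a localization of an essentially-smooth-over-$k$ algebra, tensored up to $A$, is essentially smooth over $A$ by the same base-change reasoning as in condition (2).

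With the family set up, Theorem \ref{main} applies verbatim to $(A, C, f)$, the $B$-modules $M$ and $N$ (which are $A$-flat by hypothesis), and says that $\widetilde{\theta}_\alpha(M,N)$ is independent of the field-valued point $\alpha : A \to \Lambda$. Applying this with $\alpha$ ranging over the residue field maps $A \to k(q)$ for $q \in \Spec(A)$ gives that $\widetilde{\theta}_{A \to k(q)}(M, N)$ is independent of $q$. Finally I must identify $\widetilde{\theta}_{A \to k(q)}(M,N)$ with $\theta^{R \otimes_k k(q)}(M \otimes_A k(q), N \otimes_A k(q))$. Here $B \otimes_A k(q) = R \otimes_k k(q)$, and Remark \ref{rmk2} does exactly this identification provided the residue fields of the Artinian ring $(B \otimes_A k(q))/I\cdot(B \otimes_A k(q)) = (R/I_0) \otimes_k k(q)$ are all isomorphic to $k(q)$; but this holds because $R/I_0 = \prod_{\fm \in \Sing(R)} R/\fm$ (as $I_0$ is radical with support a finite set of maximal ideals) and each $R/\fm \cong k$ by hypothesis, so $(R/I_0)\otimes_k k(q) \cong k(q)^{|\Sing(R)|}$, whose residue fields are all $k(q)$. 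Hence the Tor modules in question are supported on a set of primes with residue field $k(q)$, length equals $k(q)$-dimension, and $\widetilde{\theta} = \theta$ as claimed. The main obstacle is the bookkeeping in verifying condition (4) — in particular keeping straight that essential smoothness over $k$ localizes and base-changes correctly, and that the radical ideal $I_0$ pulls back to a good choice of $I$; everything else is routine once the family is correctly assembled.
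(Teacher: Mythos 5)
Your proposal is the same argument the paper gives: package $(A, S\otimes_k A, f\otimes 1)$ as a flat family of hypersurfaces with isolated singularities, invoke Theorem \ref{main} to get independence of $\widetilde{\theta}_\alpha$, then use the hypothesis $R/\fm \cong k$ together with the mechanism of Remark \ref{rmk2} to identify $\widetilde{\theta}_\alpha$ with the genuine $\theta^{R\otimes_k k(q)}$. The paper's proof is a single sentence plus the pointer to Remark \ref{rmk2}, so you are essentially filling in the verification of Definition \ref{D1}, and that filling-in is largely correct.

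One slip and one subtlety in your verification of condition (4). The slip: you say $R_{\fp\cap R}$ is essentially smooth over $k$ because it is ``a localization of $S$ at a point not in $\Sing(R)$''. It is not a localization of $S$; it is a localization of $R = S/f$. What you really need is the separate implication ``$R_{\fp\cap R}$ regular $\Rightarrow$ $R_{\fp\cap R}$ essentially smooth over $k$'' (from which $B_\fp$ essentially smooth over $A$ then follows by base change, as you do correctly elsewhere). That implication is true when $k$ is perfect --- and the parenthetical remark in the corollary suggests the authors have $k$ algebraically closed in mind --- but it can fail over a non-perfect field: a localization of $R$ can be a regular local ring without being geometrically regular over $k$, and the locus where this happens need not be contained in $\Sing(R)$. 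So, strictly speaking, your proof (and the paper's one-line proof, which passes over the choice of $I$ entirely) relies on this perfect-field implication to put the offending primes inside $V_B(I)$. Since the corollary's parenthetical and the downstream Corollary \ref{C2} both default to algebraically closed $k$, this is consistent with the paper's own level of care, but it is worth stating explicitly rather than attributing it to ``a localization of $S$''.

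The remaining pieces are fine: flatness of $B$ over $A$, essential smoothness of $C = S\otimes_k A$ over $A$, module-finiteness of $B/I = (R/I_0)\otimes_k A$, and the identification $(R/I_0)\otimes_k k(q) \cong k(q)^{|\Sing(R)|}$ giving length $=$ $k(q)$-dimension so that $\widetilde{\theta}_\alpha = \theta^{R\otimes_k k(q)}$ without needing $k(q)$ algebraically closed. That last point is exactly what the paper means by ``the fact that $R/\fm\cong k$\dots implies,'' and you have spelled it out correctly.
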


\begin{proof} This follows by applying the theorem to the flat family
  of hypersurfaces with isolated  singularities given by $A$, $C = S
  \otimes_k A$, and $f \otimes 1 \in C$.
The fact that $R/\fm \cong k$ for all $\fm \in \Sing(R)$ implies that
$$
\widetilde{\theta}_\alpha(M,N)
=  \theta^{R\otimes_{k} L}
\left(M\otimes_{A} L, N\otimes_{A}
      L\right)
$$
where $\alpha: A \to L$ is any ring map with $L$ a field.
\end{proof}

\begin{defn}
For a finitely generated $k$-algebra $R$ with $k$ algebraically
closed, we define {\em algebraic equivalence} on
 $G(R)$ as the equivalence relation $\thicksim$  
generated by the following
 elementary relation: Given classes $\alpha$ and $\beta$ in  $G(R)$,
 $\alpha \thicksim \beta$ if there exist a finitely generated, smooth $k$-algebra $A$ with $\Spec(A)$ connected, maximal
ideals $\fm_{1}$ and $\fm_{2}$ of $A$ and a class $\gamma \in G(R\otimes_{k}A)$ such that $\alpha=i_{1}(\gamma)$ and
$\beta=i_{2}(\gamma)$, where
$$
i_{\epsilon}:G(R\otimes_{k}A) \rightarrow G(R)
$$
is the homomorphism defined on generators by
$$
[M]\mapsto \sum_{j\geq 0}(-1)^{j}\left[\Tor^{A}_{j}(M,A/\fm_{\epsilon})\right].
$$
Observe that 
$\Tor^{A}_{j}(M,A/\fm_{\epsilon})$ is 
an $R \otimes_k
A/\fm_\epsilon$-module and,
since $k \cong A/\fm_\epsilon$, we have that $R \cong R \otimes_k
A/\fm_\epsilon$.
\end{defn}

\begin{cor} \label{C2}
If $k$ an algebraically closed field, $S$ is a smooth $k$-algebra, and
$R = S/f$ is a hypersurface with isolated singularities, 
then the  $\theta$ pairing on $G(R)$  factors through algebraic equivalence; that is, there is a
commutative diagram of the form:
$$
\xymatrix{
G(R)^{\otimes 2}_{\QQ}\ar[r]^-\theta \ar@{->>}[d] & \QQ \\
\left(\frac{G(R)_{\QQ}}{\thicksim}\right) ^{\otimes 2} \ar[ru]&
}
$$
\end{cor}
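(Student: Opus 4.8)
The plan is to show that the elementary relation generating algebraic equivalence is respected by the $\theta$ pairing, i.e.\ if $\alpha \thicksim \beta$ in $G(R)$ via a witnessing smooth connected $k$-algebra $A$, maximal ideals $\fm_1, \fm_2$, and a class $\gamma \in G(R \otimes_k A)$, then $\theta(\alpha, -) = \theta(\beta, -)$ as functionals on $G(R)_\QQ$ (and symmetrically in the second slot). Since $\theta$ is bilinear and algebraic equivalence is the equivalence relation generated by this elementary relation, this suffices to produce the claimed factorization through $\left(G(R)_\QQ/\thicksim\right)^{\otimes 2}$.

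First I would reduce to the case $\gamma = [M]$ for a single finitely generated $R \otimes_k A$-module $M$, since $G(R \otimes_k A)$ is generated by such classes and everything in sight is additive. Next, the key observation is that $M$ need not be $A$-flat, but $\gamma$ differs in $G(R \otimes_k A)$ from an alternating sum of $A$-flat modules: choosing a finite resolution of $M$ by $R \otimes_k A$-modules that are flat (even free) over $A$ — possible because $A$ is smooth over $k$, hence regular, so $A$ has finite global dimension and one can splice together free-over-$A$ presentations and take a high enough syzygy, which is then flat over the regular ring $A$ by Auslander--Buchsbaum applied locally — we may replace $[M]$ by $\sum (-1)^j [F_j]$ with each $F_j$ finitely generated over $R \otimes_k A$ and flat over $A$. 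Under the maps $i_\epsilon$, since each $F_j$ is $A$-flat, $\Tor^A_{>0}(F_j, A/\fm_\epsilon) = 0$, so $i_\epsilon(\gamma) = \sum (-1)^j [F_j \otimes_A A/\fm_\epsilon] = \sum(-1)^j[F_j \otimes_A k]$ in $G(R)$. Thus it is enough to treat the case where $\gamma = [M]$ with $M$ itself $A$-flat, and then $i_\epsilon([M]) = [M \otimes_A k(\fm_\epsilon)]$.

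Now fix any finitely generated $R$-module $T$. We want $\theta^R(M \otimes_A k(\fm_1), T) = \theta^R(M \otimes_A k(\fm_2), T)$. Consider the $R \otimes_k A$-module $M' := M \otimes_R (R \otimes_k A) \cong M$ together with $T \otimes_k A$, which is $A$-flat since $T$ is a $k$-module and $A$ is a (flat) $k$-algebra; likewise $M$ is $A$-flat by hypothesis. Apply Corollary \ref{C1} with this $A$, with $S$ the given smooth $k$-algebra, with $f$, and with the two $R \otimes_k A$-modules $M$ and $T \otimes_k A$: it gives that $\theta^{R \otimes_k k(q)}(M \otimes_A k(q), (T \otimes_k A) \otimes_A k(q))$ is independent of $q \in \Spec(A)$. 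Taking $q = \fm_1$ and $q = \fm_2$, and noting $(T \otimes_k A) \otimes_A k(\fm_\epsilon) \cong T \otimes_k k \cong T$ and $R \otimes_k k(\fm_\epsilon) \cong R$ (using $k(\fm_\epsilon) \cong k$ since $k$ is algebraically closed and $A$ is finite type), we get exactly $\theta^R(M \otimes_A k(\fm_1), T) = \theta^R(M \otimes_A k(\fm_2), T)$. Symmetrically one handles the second argument. Hence $\theta$ kills the generating elementary relation in each slot, and by bilinearity descends to $\left(G(R)_\QQ/\thicksim\right)^{\otimes 2} \to \QQ$, giving the commutative diagram.

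The main obstacle is the passage from an arbitrary class $\gamma \in G(R \otimes_k A)$ to one represented by $A$-flat modules so that Corollary \ref{C1} (which requires $A$-flatness) can be invoked and so that $i_\epsilon$ becomes simply base change $- \otimes_A k(\fm_\epsilon)$; this uses that $A$ is regular (smoothness over $k$) in an essential way, and one must check the resolution can be taken finite and term-wise finitely generated over $R \otimes_k A$. Everything else is a direct citation of Corollary \ref{C1} together with bookkeeping about the identifications $k(\fm_\epsilon) \cong k$ and the additivity/bilinearity of $\theta$.
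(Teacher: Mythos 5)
Your proof is correct and follows essentially the same route as the paper's: apply Corollary \ref{C1} to an $A$-flat $R\otimes_k A$-module $M$ paired with $T \otimes_k A$, and reduce the general case to this one using the fact that $G(R\otimes_k A)$ is generated by classes of $A$-flat modules. The paper states that generation fact without elaboration; your high-syzygy argument (which really rests on the finite global dimension of the regular ring $A$, i.e.\ Auslander--Buchsbaum--Serre rather than the Auslander--Buchsbaum formula) simply fills in that one sentence.
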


\begin{proof} For a smooth $k$-algebra $A$ such that $\Spec(A)$ is
  connected, a pair of maximal
  ideals $\fm_1, \fm_2$ of $A$, and a finitely generated $R \otimes_k A$-module
$M$ that is $A$-flat, we 
apply Corollary \ref{C1} to $M$ and $N = T \otimes_k A$, where $T$ is a finitely
generated $R$-module. This gives $ \theta(M_1, T) = \theta(M_2, T)$, where
$M_\epsilon := M \otimes_A A/\fm_\epsilon$.
Since $G(R\otimes_{k}A)$ is generated by classes of modules that are
flat over $A$, it follows that $\theta(-,T)$ annihilates the image of 
$i_1 - i_2: G(R \otimes_k A) \to G(R)$. 
\end{proof}

\begin{prop} \label{Dao} (H. Dao, cf. \cite[2.8]{Da1}) Let $R=S/(f)$
  and let $M$ and $N$ be finitely generated $R$-modules. Assume
  $\Sing(R)=\{
  \mathfrak{\overline{n}}_{1},\mathfrak{\overline{n}}_{2}, \dots,
  \mathfrak{\overline{n}}_{r}\}$ where 
$\mathfrak{n}_{1}, \dots, \mathfrak{n}_{r}$ 
are maximal
  ideals of $S$ such that 
  $S_{\mathfrak{n}_{i}}$ is an unramified regular local ring for each
  $i$. If $\theta^{R}(M,N)=0$, then $(M,N)$ is $\Tor$-rigid over $R$.
\end{prop}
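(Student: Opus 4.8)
The plan is to exploit two structural features of a hypersurface. First, over $R=S/(f)$ minimal free resolutions are eventually $2$-periodic (matrix factorizations), so $\Tor^R_i(M,N)$ is $2$-periodic for $i\gg0$, and it is of finite length for $i\gg0$ because $R_\fp$ is regular for $\fp$ outside the finite set $\Sing(R)$; thus $\theta^R(M,N)$ is exactly the difference of the two eventual values of $\len\Tor^R_i(M,N)$. Second, one has the change-of-rings exact sequence
\[
\cdots\to\Tor^S_i(M,N)\to\Tor^R_i(M,N)\xrightarrow{\ \chi\ }\Tor^R_{i-2}(M,N)\to\Tor^S_{i-1}(M,N)\to\Tor^R_{i-1}(M,N)\to\cdots ,
\]
in which $\chi$ is the Eisenbud operator, and over the unramified regular local rings $S_{\fn_j}$ the functor $\Tor$ is rigid (Auslander, Lichtenbaum).

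First I would dispose of the non-singular primes. For $\fp\notin\Sing(R)$ the ring $R_\fp$ is regular and, under the unramifiedness hypotheses, an unramified regular local ring, so $(M_\fp,N_\fp)$ is $\Tor$-rigid over $R_\fp$; hence if $\Tor^R_n(M,N)=0$ the vanishing already propagates in degrees $\ge n$ at every such $\fp$, and it remains to prove $\Tor^R_i(M,N)=0$ for $i\ge n$ after localizing at each $\bar{\fn}_j$. Replacing $M$ with its $(n-1)$st syzygy $\Omega^{n-1}_R M$ and using additivity of $\theta$ (which discards free terms), I reduce to the case $n=1$: assuming $\Tor^R_1(M,N)=0$ and $\theta^R(M,N)=0$, show $\Tor^R_i(M,N)=0$ for all $i\ge1$.

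For the core, I claim $\Tor^R_i(M,N)=0$ for $i\gg0$. If not, $\Tor^R_\bullet(M,N)$ has a nonzero periodic tail. Localize at a singular point $\bar{\fn}_j$, where $S_{\fn_j}$ is an unramified regular local ring, and run the change-of-rings sequence there: the vanishing $\Tor^S_i(M,N)=0$ for $i>\min\{\pd_S M,\pd_S N\}$ makes $\chi$ an isomorphism in large degrees and a surjection in a cofinal range, and shows $\Tor^R_{i+2}(M,N)$ is a quotient of $\Tor^S_{i+2}(M,N)$; combined with $\Tor^R_1(M,N)=0$ this forces one of the two eventual length values $a_j,b_j$ of $\Tor^{R_{\bar{\fn}_j}}_\bullet(M_{\bar{\fn}_j},N_{\bar{\fn}_j})$ to vanish. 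The surviving value then equals $\pm\theta^{R_{\bar{\fn}_j}}(M_{\bar{\fn}_j},N_{\bar{\fn}_j})$, with a sign determined by the parity of the indices, hence the same for every $j$; since $\sum_j\theta^{R_{\bar{\fn}_j}}(M_{\bar{\fn}_j},N_{\bar{\fn}_j})=\theta^R(M,N)=0$ and each surviving value is a non-negative length, every $a_j$ and $b_j$ is zero --- so $\Tor^R_i(M,N)=0$ for $i\gg0$ after all, a contradiction. Finally, $\Tor^R_i(M,N)=0$ for $i\gg0$ forces, by support-variety theory over the hypersurface, $\pd_R M<\infty$ or $\pd_R N<\infty$, and for a module of finite projective dimension the degree-$1$ vanishing propagates by the classical rigidity of $\Tor$ (reduced, via lifting to $S$, to rigidity over the unramified regular local ring $S$); so $\Tor^R_i(M,N)=0$ for all $i\ge1$.

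The step I expect to be the main obstacle is the assertion that $\Tor^R_1(M,N)=0$ forces one of the eventual length values of $\Tor^R_\bullet(M,N)$ to vanish. When $M$ or $N$ is maximal Cohen--Macaulay over $R$ --- equivalently, has projective dimension $\le1$ over $S_{\fn_j}$ --- the complex $\Tor^R_\bullet(M,N)$ is $2$-periodic for all $i\ge1$ and the change-of-rings argument applies verbatim; but one cannot reduce to this by passing to a high syzygy, since that loses the degree-$1$ vanishing. I would therefore argue by induction on $\depth_R M+\depth_R N$, using maximal Cohen--Macaulay approximations $\ses{Y}{X}{M}$ with $\pd_R Y<\infty$ (whence $\theta^R(M,N)=\theta^R(X,N)$) to trade $M$ for a Cohen--Macaulay module while controlling the effect on low-degree $\Tor$ through the associated long exact sequence. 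Throughout, the hypothesis $\theta^R(M,N)=0$ enters in exactly one place: to exclude a nonzero constant periodic tail for $\Tor^R_\bullet(M,N)$ --- the obstruction realized by $R/(x)\otimes_R R/(x)$ over $R=k[[x,y]]/(xy)$, which is not $\Tor$-rigid and has $\theta=-1$.
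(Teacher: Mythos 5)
Your strategy --- first prove $\Tor^R_i(M,N)=0$ for $i\gg0$, then deduce $\pd_R M<\infty$ or $\pd_R N<\infty$ from support-variety theory, then invoke classical rigidity in the finite-projective-dimension case --- is genuinely different from the paper's, and it breaks down exactly at the step you flag. The claim that $\Tor^R_1(M,N)=0$ and $\theta^R(M,N)=0$ together force one of the eventual length values of $\Tor^R_\bullet$ to vanish is not established. In the change-of-rings sequence $\Tor^R_{i+2}\xrightarrow{\ \chi\ }\Tor^R_i\to\Tor^S_{i+1}\to\Tor^R_{i+1}$, the Eisenbud operator $\chi$ is only known to be an isomorphism once $\Tor^S$ vanishes in the relevant degrees, so the vanishing of $\Tor^R_1$ has no control over the periodic tail when $\pd_{S_{\fn_j}}M_{\fn_j}$ is large. (Note also that your parenthetical ``$\Tor^R_{i+2}$ is a quotient of $\Tor^S_{i+2}$'' has the arrow backwards: $\Tor^S_{i+2}$ maps \emph{into} $\Tor^R_{i+2}$.) Your proposed repair via an MCM approximation $\ses{Y}{X}{M}$ does not close the gap: while $\theta^R(X,N)=\theta^R(M,N)$, the long exact sequence gives $\Tor^R_1(Y,N)\to\Tor^R_1(X,N)\to\Tor^R_1(M,N)=0$, so $\Tor^R_1(X,N)$ is merely a quotient of $\Tor^R_1(Y,N)$ and need not vanish --- you purchase the MCM property at the cost of the degree-one vanishing you needed to retain, and the ``induction on $\depth_R M+\depth_R N$'' runs in the wrong direction, since $X$ has maximal depth.

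The paper avoids this entirely by a single computation. Starting from $\Tor^R_n(M,N)=0$, it splices the change-of-rings long exact sequence with the eventual $2$-periodicity of $\Tor^R$ into one \emph{finite} exact sequence
$$
0\to\Tor^R_{2e}(M,N)\to\cdots\to\Tor^R_n(M,N)\to\Tor^S_{n+1}(M,N)\to\Tor^R_{n+1}(M,N)\to C\to0,
$$
localizes at the singular locus (so all lengths are finite), and takes the alternating sum of lengths to obtain $\len(C)+\chi^S_{n+1}(M,N)=(-1)^{2e-n}\theta^R(M,N)+\len\Tor^R_n(M,N)=0$. The decisive input is the Lichtenbaum--Hochster positivity theorem: over an unramified regular local ring the partial Euler characteristics $\chi^{S_{\fn_i}}_t$ are nonnegative and vanish if and only if the corresponding $\Tor^{S_{\fn_i}}_j$ vanish for $j\geq t$. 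This forces $C=0$ and $\chi^S_{n+1}=0$ simultaneously, hence $\Tor^R_{n+1}(M,N)=0$, and one iterates. The unramifiedness hypothesis on the $S_{\fn_i}$ --- which your sketch cites only in passing via rigidity over $S$ --- is present precisely to invoke this positivity. You should recenter your argument on $\chi^S$-positivity rather than on trying to force the periodic tail of $\Tor^R$ to vanish directly.
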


\begin{proof} Suppose that $\Tor_{n}^{R}(M,N)=0$ for some $n\geq 0$. 
Since rigidity holds over regular local rings
(\cite[Corollary 2.2]{Au} and
\cite[Corollary 1]{Li}), $\Supp(\Tor_{i}^{R}(M,N))
\subseteq \Sing(R)$ for all $i\geq n$. Hence
$\len(\Tor_{i}^{R}(M,N))<\infty$ for all $i\geq n$. As
$\theta^{R}(M,N)=\theta^{T^{-1}R}(T^{-1}M,T^{-1}N)$ for
$T=R-\bigcup_{i}\mathfrak{\overline{n}}_{i}$, we may assume $S$ is
semi-local with maximal ideals $\mathfrak{n}_{1}$,
$\mathfrak{n}_{2}$, \dots, $\mathfrak{n}_{r}$. Then
$$
\chi^{S}_{n}(M,N)=\chi^{S_{\mathfrak{n}_{1}}}_{n}(M_{\mathfrak{n}_{1}},N_{\mathfrak{n}_{1}})+\dots
+\chi^{S_{\mathfrak{n}_{r}}}_{n}(M_{\mathfrak{n}_{r}},N_{\mathfrak{n}_{r}})
$$ 
where $\ds{\chi^{S}_{n}(M,N)=\sum_{j \geq
n}(-1)^{j-n}\cdot \len(\Tor^{R}_{j}(M,N))}$ is the higher Euler
characteristic of the pair $(M,N)$. Write
$$
\theta^{R}(M,N)=\len(\Tor_{2e}^{R}(M,N))-\len(\Tor_{2e-1}^{R}(M,N))
$$
for some $e\gg 0$ and consider the following exact sequence
\cite{J}:
$$
\begin{aligned}
0 \to \Tor_{2e}^{R}(M,N) \to & \cdots \to \Tor_{n}^{R}(M,N)
\\
&  \to \Tor_{n+1}^{S}(M,N) \to \Tor_{n+1}^{R}(M,N) \to C \to
0
\end{aligned}
$$
Taking the alternating sum of the lengths, we deduce that:
$$\len(C)+\chi_{n+1}^{S}(M,N)=(-1)^{2e-n}\cdot \theta_{R}(M,N)+\len(\Tor_{n}^{R}(M,N))$$
It follows from \cite{Ho2} and \cite{Li} that, for all $t \geq n$,
$\chi^{S_{\mathfrak{n}_{i}}}_{t}(M_{\mathfrak{n}_{i}},N_{\mathfrak{n}_{i}})\geq
0$ and
$\ds{\chi^{S_{\mathfrak{n}_{i}}}_{t}(M_{\mathfrak{n}_{i}},N_{\mathfrak{n}_{i}})=0}$
if and only if $\ds{\Tor_{j}^{
S_{\mathfrak{n}_{i}}}(M_{\mathfrak{n}_{i}},N_{\mathfrak{n}_{i}})=0}$
for all $j\geq t$. Thus $\ds{C=0}$ and $\ds{\chi_{n+1}^{S}(M,N)=0}$.
This implies $\Tor_{n+1}^{R}(M,N)=0$ and hence $\Tor_{i}^{R}(M,N)=0$
for all $i\geq n$.
\end{proof}

Using Dao's result (as extended in Proposition \ref{Dao}), we may use
Theorem \ref{main} to give a statement about $\Tor$-rigidity:

\begin{cor} \label{cor2}
Assume $A, C, f, B = C/f$ form a flat family of hypersurfaces with isolated
singularities and $M, N$ are finitely generated $B$-modules that are
$A$-flat. Let $\alpha: A \to L$ and $\beta: A \to
  E$ be ring maps where $L$ and $E$ are fields. If
$\ds{\widetilde{\theta}_{\alpha}(M,N)=0}$, then the pair
$(M\bet,N\bet)$ is $\Tor$-rigid over $B\otimes_{A}E$. In particular, 
if $\pd_{B\otimes_{A}L}(M\otimes_{A}L)<\infty$, then the pair $(M\bet,N\bet)$ is $\Tor$-rigid over $B\otimes_{A}E$.
\end{cor}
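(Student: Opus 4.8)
The strategy is to combine Theorem~\ref{main} with Proposition~\ref{Dao}, after first base changing to an algebraically closed field so that the pairing $\widetilde\theta$ becomes literally Hochster's $\theta$. To begin, observe that the ``in particular'' clause follows from the first assertion: if $\pd_{B\otimes_A L}(M\otimes_A L)$ is finite, then $\Tor_i^{B\otimes_A L}(M\otimes_A L, N\otimes_A L) = 0$ for all $i \gg 0$, so the formula defining $\widetilde\theta_\alpha(M,N)$ yields $0$. Thus one may assume $\widetilde\theta_\alpha(M,N) = 0$ and must show that $(M\bet, N\bet)$ is $\Tor$-rigid over $B\otimes_A E$.

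Let $\overline E$ be an algebraic closure of $E$ and let $\overline\beta\colon A\to \overline E$ be the composite $A\map{\beta}E\hookrightarrow \overline E$. The first step is to reduce to the field $\overline E$. Since $E\to \overline E$ is faithfully flat, flat base change for $\Tor$ gives
$$
\Tor_i^{B\otimes_A \overline E}(M\otimes_A \overline E, N\otimes_A \overline E)\ \cong\ \Tor_i^{B\otimes_A E}(M\bet, N\bet)\otimes_E \overline E,
$$
and a $B\otimes_A E$-module vanishes if and only if its base change to $B\otimes_A \overline E$ does; hence the set of indices $i$ for which the $\Tor$ module vanishes is the same over both rings. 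In particular, if $(M\otimes_A \overline E, N\otimes_A \overline E)$ is $\Tor$-rigid over $B\otimes_A \overline E$, then $(M\bet, N\bet)$ is $\Tor$-rigid over $B\otimes_A E$.

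Next, Theorem~\ref{main} gives $\widetilde\theta_{\overline\beta}(M,N) = \widetilde\theta_\alpha(M,N) = 0$, and since $\overline E$ is algebraically closed, Remark~\ref{rmk2} identifies this integer with $\theta^{B\otimes_A \overline E}(M\otimes_A \overline E, N\otimes_A \overline E)$, which is therefore $0$. I would then apply Proposition~\ref{Dao} with $R := B\otimes_A \overline E = (C\otimes_A\overline E)/(f)$ and $S := C\otimes_A \overline E$: by Remark~\ref{R1}, $\Sing(R)$ is a finite set of maximal ideals of $R$, corresponding to finitely many maximal ideals $\mathfrak n_1,\dots,\mathfrak n_r$ of $S$; and since $C\otimes_A \overline E$ is regular (Remark~\ref{R1}) and is an algebra over the field $\overline E$, each localization $S_{\mathfrak n_i}$ is an equicharacteristic regular local ring, hence unramified. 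Proposition~\ref{Dao} then shows that $(M\otimes_A \overline E, N\otimes_A \overline E)$ is $\Tor$-rigid over $B\otimes_A \overline E$, which completes the argument in view of the previous paragraph.

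The only point requiring care is the distinction between $\widetilde\theta_\beta$, defined via $\dim_E$, and $\theta^{B\otimes_A E}$, defined via length: these integers need not coincide when $E$ is not algebraically closed, which is precisely why the base change to $\overline E$ must be carried out before invoking Remark~\ref{rmk2} and Proposition~\ref{Dao}. Everything else --- flat base change for $\Tor$, the fact that equicharacteristic regular local rings are unramified, and the reduction of the ``in particular'' clause --- is routine.
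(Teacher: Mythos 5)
Your proof is correct and follows essentially the same path as the paper's: extend $\beta$ to the algebraic closure $\overline E$, apply Theorem~\ref{main} and Remark~\ref{rmk2} to conclude $\theta^{B\otimes_A\overline E}(M\otimes_A\overline E, N\otimes_A\overline E)=0$, invoke Proposition~\ref{Dao}, and descend along the faithfully flat map $B\otimes_A E\to B\otimes_A\overline E$. The only difference is cosmetic --- you make explicit the unramifiedness check for $S_{\mathfrak n_i}$ and the reduction of the ``in particular'' clause, both of which the paper treats as immediate.
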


\begin{proof} It follows from Theorem \ref{main} that $\ds{\widetilde{\theta}_{\gamma}(M,N)=0}$, where
$\gamma$ is the ring map from $A$ to the algebraic closure
$\overline{E}$ of $E$ induced by $\beta$. In view of Remark
\ref{rmk2} we have $\ds{\theta^{B\otimes_{A}\overline{E}}
(M\otimes_{A}\overline{E},N\otimes_{A}\overline{E})=0}$. Hence, by
Proposition \ref{Dao}, the pair
$(M\otimes_{A}\overline{E},N\otimes_{A}\overline{E})$ is
$\Tor$-rigid over $B\otimes_{A}\overline{E}$. The result now follows
from the fact that $B\otimes_{A}E \map{} B\otimes_{A}\overline{E}$
is faithfully flat. The final assertion holds since the $\theta$ pairing clearly vanishes
if either argument has finite projective dimension.
\end{proof}

It is worth mentioning that $\pd_{B\otimes_{A}L}(M\otimes_{A}L)<\infty$ does not imply $\pd_{B\otimes_{A}E}(M\otimes_{A}E)<\infty$ in general:

\begin{eg} Let $k$ be a field with $\chr(k) \neq 2$ and let $A=k[t]$, $C=k[x,y,t]$, $L=k(t)$, $E=A/(t)$, $f=y^{2}-x(x-t)$ and $M=B/(x,y)$. Then  $B\otimes_{A}L=k(t)[x,y]/(f)$ is regular since it is smooth over $k(t)$. Thus $\pd_{B\otimes_{A}L}(M\otimes_{A}L)<\infty$. However, as $B\otimes_{A}E \cong k[x,y]/(y^2-x^2)$ is singular at $(x,y)$, $M\otimes_{A}E \cong k$ is of infinite projective dimension over $B\otimes_{A}E$.
\end{eg}

\section{Applications to Matrix Factorizations}

In certain situations it is possible to lift a given hypersurface with
isolated singularities to a non-trivial flat family of such. For
example, starting with one in characteristic $p$, it is sometimes
possible to
lift it to a family that is  indexed by a mixed characteristic
ring. In some situations, this allows one to deduce results in the
characteristic $p$ setting from known results in the characteristic
$0$ setting.

\begin{prop} \label{prop1} Let $F$ be a field, $S=F[x_1, \dots, x_n]$
  and $f\in (x_1, \dots, x_n) \subset S$. Assume $(x_1, \dots, x_n)$
  is an isolated point in the locus of non-smooth points of $S/f$
--- i.e., assume 
there exists $g\in S \setminus (x_1, \dots, x_n)$ such
  that $\Spec\left((S/f)\left[\frac{1}{g}\right] \right)-\{(x_1,
  \dots, x_n)\}$ is smooth over $F$. Let $(A, \mathfrak{m}_{A})$ be a
  Henselian local ring (e.g., a complete local ring) with residue field $F$ and let
  $\widetilde{S}=A[x_1, \dots, x_n]$ and $\fn = \fm_A + (x_1, \dots, x_n)$.

For any $\widetilde{f}\in \widetilde{S}$
  such that $\widetilde{f}\equiv f \pmod{\mathfrak{m}_{A}}$, 
there exits $h\in
    \widetilde{S} \setminus \fn$ such that
$\left(\widetilde{S}/\widetilde{J}\right) \left[\frac{1}{h}\right]$
 is a finitely generated $A$-module, where
  $\widetilde{J}=\left(\frac{\partial \widetilde{f}}{\partial
        x_1},\dots, \frac{\partial \widetilde{f}}{\partial x_n},
      \widetilde{f}\right)$ and $\frac{\partial -}{\partial x_i}$ are
    the evident $A$-linear derivations.
\end{prop}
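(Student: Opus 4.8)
The plan is to rephrase the conclusion geometrically, as a statement about the scheme $X := \Spec(\widetilde{S}/\widetilde{J})$ near a distinguished point of its closed fibre over $\Spec(A)$, and then to produce $h$ from an idempotent of $\widetilde{S}/\widetilde{J}$ manufactured by combining Zariski's Main Theorem with the hypothesis that $A$ is Henselian.

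First I would fix notation. Put $J = (\partial f/\partial x_1, \dots, \partial f/\partial x_n, f) \subseteq S$. Since $\fn := (x_1,\dots,x_n)$ lies in the non-smooth locus of $R := S/f$, the latter is a genuine hypersurface, and the Jacobian criterion identifies its non-smooth locus over $F$ with $V_S(J)$; the hypothesis on $g$ then reads $V_S(J) \cap D_S(g) = \{\fn\}$ (using $\fn \in V_S(J)$ and $g \notin \fn$). Because the derivations $\partial/\partial x_i$ commute with reduction modulo $\mathfrak{m}_A$ and $\widetilde{f} \equiv f$, the image of $\widetilde{J}$ in $\widetilde{S}/\mathfrak{m}_A\widetilde{S} = S$ is $J$; hence $\fn$, viewed now as an ideal of $\widetilde{S}$, contains $\widetilde{J}$ and defines a point of $X$ lying over the closed point $\mathfrak{m}_A \in \Spec(A)$, and the closed fibre of $X \to \Spec(A)$ is $X_0 = \Spec(S/J)$. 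The equality $V_S(J) \cap D_S(g) = \{\fn\}$ shows $\{\fn\}$ is open in $X_0$, i.e.\ $\fn$ is an isolated point of the closed fibre. (In the essentially degenerate case where $\fn$ happens to be a \emph{smooth} point of $R$, some $\partial f/\partial x_i$ avoids $\fn$, and then $h = \partial\widetilde{f}/\partial x_i$ works immediately: it lies outside $\fn$ but inside $\widetilde{J}$, so $(\widetilde{S}/\widetilde{J})[1/h] = 0$.)

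Now $X \to \Spec(A)$ is of finite type, separated (it is affine), and quasi-finite at $\fn$. By Zariski's Main Theorem there is an open $V \subseteq X$ with $\fn \in V$ and a factorization $V \hookrightarrow T \to \Spec(A)$ in which $V \hookrightarrow T$ is an open immersion and $T = \Spec(\mathcal{B}) \to \Spec(A)$ is finite. Since $A$ is Henselian local, $\mathcal{B}$ decomposes as a finite product $\prod_i \mathcal{B}_i$ of local rings finite over $A$, so $T = \bigsqcup_i T_i$ with each $T_i := \Spec(\mathcal{B}_i)$ open and closed in $T$. The point $\fn$ lies in a single $T_1$; as $A \to \mathcal{B}_1$ is local (the maximal ideal of the $A$-finite local ring $\mathcal{B}_1$ contracts to $\mathfrak{m}_A$), the ring $\mathcal{B}_1 \otimes_A \kappa(\mathfrak{m}_A)$ is Artinian local, so the fibre of $T_1 \to \Spec(A)$ over $\mathfrak{m}_A$ is a single point, which must be $\fn$; thus $\fn$ is the closed point of the local scheme $T_1$. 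Since the only open neighbourhood of the closed point of $\Spec$ of a local ring is everything, $V \cap T_1 = T_1$, and so $T_1 \subseteq V \subseteq X$. Then $T_1 \hookrightarrow X$ is at once an open immersion (hence a monomorphism) and a proper morphism (since $T_1 \to \Spec(A)$ is finite and $X \to \Spec(A)$ is separated), hence a closed immersion; therefore $T_1$ is open and closed in $X$, cut out by an idempotent $e \in \widetilde{S}/\widetilde{J}$, and $(\widetilde{S}/\widetilde{J})[1/e] = \mathcal{O}_X(T_1) = \mathcal{B}_1$ is finite over $A$. Lifting $e$ to $h \in \widetilde{S}$ finishes the argument: $(\widetilde{S}/\widetilde{J})[1/h] \cong \mathcal{B}_1$ is a finitely generated $A$-module, and $h \notin \fn$ because $\fn \in D(e)$.

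The step I expect to be the crux is the passage from ``quasi-finite at $\fn$'' to ``finite over $A$ after inverting a single element outside $\fn$'': merely restricting to a principal affine open around $\fn$ does not preserve finiteness over $A$, so one must genuinely produce a clopen neighbourhood, and this is exactly where the Henselian hypothesis (splitting the finite scheme $T$ into local pieces) and the ``proper monomorphism is a closed immersion'' observation are indispensable. A more routine point to verify carefully is that reduction modulo $\mathfrak{m}_A$ commutes with the derivations $\partial/\partial x_i$, so that the closed fibre of $X$ is genuinely $V_S(J)$ and the hypothesis on $g$ carries over.
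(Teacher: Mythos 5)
Your proof is correct and closely parallels the paper's argument, but it unpacks a citation that the paper leaves opaque, so a brief comparison is worthwhile. The paper's proof also reduces to the case $\widetilde J \subseteq \fn$, uses semicontinuity of fibre dimension (EGA IV 13.1.3) to find a principal open $T[1/\alpha]$ quasi-finite over $A$, and then cites the structure theorem for quasi-finite algebras over Henselian local rings (Milne, \emph{\'Etale Cohomology} I.4.2) to split $T[1/\alpha] \cong V_0 \times V_1 \times \cdots \times V_m$ with each $V_i$ ($i \ge 1$) finite local over $A$ and $V_0$ having no primes over $\fm_A$; finally it inverts the idempotent isolating the factor $V_1$ containing $\fn$. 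What you have done is re-derive that structure theorem inline: local Zariski's Main Theorem produces the open immersion $V \hookrightarrow T$ into a scheme finite over $\Spec(A)$, the Henselian hypothesis splits $T$ into local pieces, and the observation that a proper monomorphism is a closed immersion shows $T_1$ is clopen in $X$, yielding the idempotent $e$ whose lift is $h$. The paper's $h = \alpha\beta$ and your lifted idempotent $h$ play identical roles (the paper's $\beta$ \emph{is} in effect an idempotent of the product decomposition), so the two endgames are the same. Your version is more self-contained and makes visible exactly where the Henselian hypothesis enters; the paper's is shorter because it outsources exactly that work. One small stylistic note: you should say explicitly that quasi-finiteness of $X \to \Spec(A)$ at $\fn$ is an open condition (again EGA IV 13.1.3, the same semicontinuity the paper invokes) before invoking the open-immersion-into-a-finite-scheme form of ZMT; your phrasing ``quasi-finite at $\fn$'' silently uses this, and making it explicit would align your write-up with the paper's.
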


\begin{proof} 
We may assume $\ds{\widetilde{J} \subseteq \mathfrak{n}}$, for otherwise
we may pick $h$ to be any element of $\widetilde{J}-\mathfrak{n}$.

Let $T = \widetilde{S}/\widetilde{J}$. Since $A \to T$ has finite type,
by the upper semi-continuity of fiber dimensions \cite[13.1.3]{EGAIV}, the set
of primes $q \in \Spec(T)$ such that $\dim((T/(q \cap A))_{q}= 0$ is an
open subset of $\Spec(T)$.
Since $(T/(\fn \cap A))_{\fn} \cong (S/f)_{(x_1, \dots, x_n)}$ and 
$(x_1, \dots, x_n)$ is an isolated singularity of $S/f$,
we get that $\fn$ belongs to this open subset.
It follows that there exists an $\alpha \in T \setminus \fn$ such that
$A \to V := T\left[\frac{1}{\alpha}\right]$ is
  quasi-finite.

Since $A$ is a Henselian local ring, it follows from \cite[I.4.2]{Milne} that $V=V_{0} \times V_{1} \times
\cdots \times V_{m}$ where $V_{i}$ is a finitely generated $A$-module
for all $i=1, \dots, m$ and $\fp \cap A =\{0\}$ for all $\fp\in
\Spec(V_{0})$. Identifying $\Spec(V)$  with the disjoint union of $\Spec(V_{i})$, we see that $\mathfrak{n} \notin \Spec(V_{0})$. 
Without loss of generality, assume $\mathfrak{n}\in
\Spec(V_{1})$, so that $(V_{i})_{\mathfrak{n}}=0$ for $i\neq 1$. Then
there exits $\beta \in V \setminus \mathfrak{n}$ such that
$V_{1}\left[\frac{1}{\beta}\right]=V_{1}$ and
$V_{i}\left[\frac{1}{\beta}\right]=0$ for $i \ne 1$. 

 Now
set $h=\alpha \cdot \beta$. Then
$(\widetilde{S}/\widetilde{J})\left[\frac{1}{h}\right] \cong
V\left[\frac{1}{\beta}\right] \cong V_{1}$ is
a finitely generated $A$-module.
\end{proof}

In the homogeneous case, the previous result may be refined:

\begin{prop} \label{prop2} Let $F$ be a field, $S=F[x_1, \dots, x_n]$
  and $f\in S$ be a homogeneous element such that 
  $(x_1, \dots, x_n)$ is the only non-smooth 
point of $S/f$. Let $A$ be a complete
  local ring with residue field $F$. Let $C=A[x_1, \dots,
  x_n]$ and let $\widetilde{f}\in C$ be any homogeneous element
  such that $\widetilde{f}\equiv f \pmod{\fm_{A}}$. Set $B=C/(\widetilde{f})$ and $I=\left(\frac{\partial
        \widetilde{f}}{\partial x_1}, \dots, \frac{\partial
        \widetilde{f}}{\partial x_n} \right) \subset B$. Then $B/I$ is a finitely generated $A$-module.
\end{prop}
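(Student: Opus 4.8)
The plan is to exploit homogeneity. Write $\widetilde{J} = \left(\widetilde{f}, \frac{\partial \widetilde{f}}{\partial x_1}, \dots, \frac{\partial \widetilde{f}}{\partial x_n}\right) \subset C$, so that $B/I = C/\widetilde{J}$; set $T := C/\widetilde{J}$ and grade $C = A[x_1, \dots, x_n]$ by placing $A$ in degree $0$ and each $x_i$ in degree $1$. Since $\widetilde{f}$ is homogeneous, so is each partial $\frac{\partial \widetilde{f}}{\partial x_i}$, and hence $\widetilde{J}$ is a homogeneous ideal and $T = \bigoplus_{i \geq 0} T_i$ is a graded $A$-algebra. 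Each $T_i$ is a quotient of the degree-$i$ component $C_i$ of $C$, which is a free $A$-module of finite rank (spanned by the finitely many degree-$i$ monomials in the $x_j$); thus every $T_i$ is a finitely generated $A$-module, and it suffices to prove $T_i = 0$ for all $i \gg 0$. (We may assume $f$ is a nonconstant homogeneous polynomial, the other cases being trivial.)

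First I would compute the special fiber. The derivations $\frac{\partial}{\partial x_i}$ are $A$-linear and compatible with the reduction map $C \to C/\fm_A C = F[x_1, \dots, x_n] = S$, and $\widetilde{f} \equiv f \pmod{\fm_A}$, so the image of $\widetilde{J}$ in $S$ is exactly $J_0 := \left(f, \frac{\partial f}{\partial x_1}, \dots, \frac{\partial f}{\partial x_n}\right)$. Hence $T \otimes_A F \cong S/J_0$ as graded $F$-algebras, which in degree $i$ reads $T_i/\fm_A T_i \cong (S/J_0)_i$. Now the Jacobian criterion identifies the non-smooth locus of $S/f$ over $F$, viewed inside $\Spec S$, with $V(J_0)$: indeed $J_0 \supseteq (f)$, and $\left(\frac{\partial f}{\partial x_1}, \dots, \frac{\partial f}{\partial x_n}\right)$ generates the Jacobian ideal of the hypersurface $S/f$. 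So the hypothesis that $(x_1, \dots, x_n)$ is the only non-smooth point of $S/f$ says precisely that $V(J_0) \subseteq \{(x_1, \dots, x_n)\}$. Therefore $\sqrt{J_0} \supseteq (x_1, \dots, x_n)$, so $(x_1, \dots, x_n)^N \subseteq J_0$ for some $N$, and consequently the graded ring $S/J_0$ vanishes in all degrees $\geq N$.

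To finish, fix any $i \geq N$. Then $T_i/\fm_A T_i \cong (S/J_0)_i = 0$, i.e. $T_i = \fm_A T_i$; since $T_i$ is a finitely generated module over the local ring $A$, Nakayama's lemma gives $T_i = 0$. Hence $T = \bigoplus_{0 \leq i < N} T_i$ is a finite direct sum of finitely generated $A$-modules, so $B/I = T$ is a finitely generated $A$-module, as desired. The argument is short, and the step demanding the most care is the base-change identification $T \otimes_A F \cong S/J_0$ --- i.e., checking that reduction modulo $\fm_A$ commutes with the partial derivatives and with passage to the quotient by $\widetilde{J}$; once this is in hand, homogeneity together with a degreewise application of Nakayama does all the work. (Notably this route uses neither Proposition \ref{prop1} nor the completeness of $A$; alternatively one could feed Proposition \ref{prop1} into the argument, but then one would still have to remove the extra localization at $h$, which the grading renders unnecessary.)
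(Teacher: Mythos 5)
Your proof is correct, and it takes a genuinely different --- and more elementary --- route than the paper's. The paper's argument first invokes Proposition \ref{prop1} (which rests on the structure theory of quasi-finite algebras over a Henselian local ring) to produce an $h \notin \fm_A + (x_1, \dots, x_n)$ with $(B/I)\left[\frac{1}{h}\right]$ module-finite over $A$, and then recovers finiteness of $B/I$ itself via a filtration $0 = M_0 \subseteq \cdots \subseteq M_n = B/I$ by graded submodules with cyclic quotients $M_i/M_{i-1} \cong C/q_i$: each homogeneous prime $q_i$ lies in the unique maximal graded ideal $\fm_A + (x_1, \dots, x_n)$, so $h \notin q_i$, so $M_i/M_{i-1} \hookrightarrow (M_i/M_{i-1})\left[\frac{1}{h}\right]$ is injective, and finiteness over the Noetherian ring $A$ descends along this injection. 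You bypass all of this by working degree by degree: since $T := B/I$ is a graded $A$-algebra with each $T_i$ a quotient of the finite free $A$-module $C_i$, it suffices to kill high degrees; the base-change identification $T\otimes_A F \cong S/J_0$ together with the isolated-non-smooth-point hypothesis (which forces $\sqrt{J_0} = (x_1,\dots,x_n)$, hence $(x_1,\dots,x_n)^N \subseteq J_0$) gives $(S/J_0)_i = 0$ for $i \geq N$, and Nakayama in each degree then yields $T_i = 0$ for $i \geq N$. Your route is shorter, avoids Proposition \ref{prop1} entirely, and --- as you observe --- shows that completeness (or even Henselianness) of $A$ is not actually needed for this statement, only that $A$ is local with residue field $F$; the paper presumably reuses Proposition \ref{prop1} here simply because it is already needed in the non-homogeneous setting.
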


\begin{proof} Observe that 
$B/I$ is a graded $C$-module. 
By Proposition \ref{prop1}, there exists an $h \notin  
  \mathfrak{m}_{A}+ (x_1, \dots, x_n)$ such that
  $(B/I)\left[\frac{1}{h}\right]$ is a finitely generated $A$-module. Consider a filtration 
$$
0=M_{0}\subseteq  M_{1} \subseteq \dots \subseteq M_{n}=B/I
$$
by graded submodules, such that $\ds{M_{i}/M_{i-1} \cong C/q_{i}}$
where each $q_{i}$ is a homogeneous prime ideal.  Then $q_{i}
\subseteq \mathfrak{m}_{A} +  (x_1, \dots, x_n)$ and hence $h \notin q_{i}$ for each $i$. Since
$M_{i}/M_{i-1} \hookrightarrow
(M_{i}/M_{i-1})\left[\frac{1}{h}\right]$ is injective and
$(B/I)\left[\frac{1}{h}\right]$ is module finite over $A$, it follows that
the modules $M_i/M_{i-1}$, and hence $B/I$, are  finitely generated $A$-modules.
\end{proof}

\begin{defn} (\cite{Ei}) For a ring $V$ and homogeneous element $f \in 
V[x_0,  \dots, x_n]$ (equipped with the standard grading), a {\em homogeneous matrix factorization} of $f$ is
  pair of $m \times m$ matrices $(A,B)$ with entries in 
$V[x_0,  \dots, x_n]$ such that each determines a map of graded free
modules and $AB = fI_m = BA$. 
\end{defn}

\begin{rmk}
The condition that a $p \times q$ matrix $A = (a_{i,j})$ with entries
in $S := V[x_0,
\dots, x_n]$ determines a map of graded free modules is equivalent to
the condition that every entry of $A$ is homogeneous and there are
integers $(e_1, \dots, e_q), (d_1, \dots d_p)$ so that $|a_{i,j}| =
e_j - d_i$ for all $i,j$. For given such integers, the map
$$
A : \bigoplus_{j=1}^q S(-e_j) \to \bigoplus_{i=1}^p S(-d_i)
$$
is a map of graded free modules, i.e., homogeneous of degree $0$.  
\end{rmk}

For a homogeneous matrix factorization $(A,B)$ of $f \in S := F[x_0,
\dots, x_n]$ with $F$ a field, the cokernel of $A$ (viewed as a map of graded free modules)
is a graded maximal Cohen-Macaulay module (cf. \cite{BH}) over $R := F[x_0, \dots,
x_n]/(f)$. Conversely, if $M$ is a graded maximal Cohen-Macaulay 
$R$-module, then picking a graded free resolution of it as an $S$-module
leads to a homogeneous matrix factorization.

As mentioned in the introduction, the theta pairing is known to vanish
for hypersurfaces with isolated singularities of even dimension that
are of the form $R = F[x_0, \dots,
x_n](f)$ with $f$ homogeneous and $F$ any field \cite{MPSW} or $R = \CC[[x_0, \dots,
x_n]]/(g)$ \cite{PV}. Moreover, in the homogeneous case with 
$n = \dim(R)$  odd and $\ch(F) = 0$, the pairing
$(-1)^{\frac{n+1}{2}} \theta$ is semi-positive definite.
The following result indicates a method of
translating this fact to the positive characteristic setting in
favorable situations.

\begin{cor} \label{C3}
Let $F$ be an arbitrary field and let
  $S=F[x_0, \dots, x_n]$ be given the standard grading where $n$ is odd. Let $(A,B)$ be a homogeneous matrix
  factorization of a homogeneous element $f\in S$ such that $(x_0, \dots, x_n)$ is the only
  non-smooth prime of $R = S/f$.  

Suppose there exist a complete dvr $(V,\fm_V)$ with residue field
$F$ and field of fractions of characteristic $0$, a homogeneous element $\tilde{f} \in V[x_0,
\dots, x_n]$ and a homogeneous matrix
factorization $(\tilde{A}, \tilde{B})$ of $\tilde{f}$ such that 
$\tilde{f} \equiv f$, $\tilde{A} \equiv A$ and
$\tilde{B} \equiv B$ modulo $\fm_V$. 
Then  $\theta(M,M)\geq 0$ if $n \equiv 3
  \pmod{4}$ and $\theta(M,M)\leq 0 $ if $n \equiv 1 \pmod{4}$.
\end{cor}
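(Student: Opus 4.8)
The plan is to set up the flat family from the matrix factorization data and then invoke Theorem \ref{main} together with the characteristic-zero positivity results of \cite{MPSW}. First I would take $A := V$, $C := V[x_0, \dots, x_n]$, and $\tilde f \in C$; by Proposition \ref{prop2} the quotient $B/I$ with $I = (\partial \tilde f/\partial x_0, \dots, \partial \tilde f/\partial x_n) \subset B := C/(\tilde f)$ is module-finite over $V$, so $V, C, \tilde f$ form a flat family of hypersurfaces with isolated singularities in the sense of Definition \ref{D1} (connectedness of $\Spec(V)$ is automatic for a dvr, essential smoothness of the polynomial ring over $V$ is clear, and $B$ is $V$-free hence $V$-flat). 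Next I would produce the module $\widetilde M := \coker(\tilde A)$, viewed as a finitely generated graded $B$-module; since $(\tilde A, \tilde B)$ is a matrix factorization, $\widetilde M$ has a $2$-periodic free resolution over $C$ away from $\tilde f$, and because $\tilde A \equiv A \pmod{\fm_V}$ the reduction $\widetilde M \otimes_V F$ is exactly $M = \coker(A)$. The key point I must check here is that $\widetilde M$ is flat (equivalently, free) as a $V$-module: this follows because the entries of $\tilde A$ and $\tilde B$ reduce mod $\fm_V$ to those of $A$ and $B$, so $\widetilde M$ is presented by a matrix whose reduction presents $M$, and a standard argument (the presenting matrix of $\widetilde M$ has the same rank after reduction, using that $\tilde A \tilde B = \tilde f I_m$) shows $\widetilde M$ is $V$-free.

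With the family and the module in hand, I would apply Theorem \ref{main} to $\widetilde M$ and $\widetilde N := \widetilde M$ (both $V$-flat), obtaining that $\widetilde\theta_\alpha(\widetilde M, \widetilde M)$ is independent of the choice of $\alpha : V \to L$. Taking $\alpha$ to be the composite $V \twoheadrightarrow F \hookrightarrow \overline F$ gives, via Remark \ref{rmk2} and the fact that $R \otimes_F \overline F \to$ passage is faithfully flat (so $\theta$ is unchanged), the value $\theta^{R \otimes_F \overline F}(M \otimes_F \overline F, M \otimes_F \overline F)$. Taking $\alpha$ to be $V \hookrightarrow \mathrm{Frac}(V) \hookrightarrow \overline{\mathrm{Frac}(V)}$, a field of characteristic $0$, gives $\theta^{R'}(M', M')$ where $R' = \overline{\mathrm{Frac}(V)}[x_0, \dots, x_n]/(\tilde f)$ is a homogeneous hypersurface over an algebraically closed field of characteristic $0$ with smooth projective hypersurface, since $\tilde f$ reduces to $f$ and $(x_0, \dots, x_n)$ is the only non-smooth point of $R$ — I should check that the only non-smooth point survives to the generic fiber, which holds because the locus where the generic fiber is non-smooth is closed and its closure in $\Spec(B)$ lies inside $V_B(I)$, which is module-finite over $V$; upper semicontinuity then forces the generic fiber to have at most the origin as non-smooth point (and it is non-empty of the correct dimension since $R'$ is a hypersurface singularity of the same degree).

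Finally, I would invoke the characteristic-zero result of \cite{MPSW}: for a homogeneous hypersurface $R' = K[x_0, \dots, x_n]/(\tilde f)$ over an algebraically closed field $K$ of characteristic $0$ with $n$ odd and $\Proj(R')$ smooth, the form $(-1)^{(n+1)/2}\theta^{R'}$ is semi-positive definite, so $\theta^{R'}(M', M') \geq 0$ if $n \equiv 3 \pmod 4$ and $\leq 0$ if $n \equiv 1 \pmod 4$. Combining this with the equality $\theta^{R \otimes_F \overline F}(M \otimes_F \overline F, M \otimes_F \overline F) = \theta^{R'}(M', M')$ from Theorem \ref{main}, and the fact that $\theta^{R \otimes_F \overline F}(M \otimes_F \overline F, M \otimes_F \overline F) = \theta^R(M, M)$ by faithfully flat base change, yields the claim. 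The main obstacle I anticipate is the bookkeeping around the module $\widetilde M$: verifying carefully that it is $V$-flat and that its reduction mod $\fm_V$ really is $M$ (not merely a module with the same class in $G(R)$), so that $\widetilde\theta$ computed at the special fiber genuinely equals $\theta^R(M,M)$; the geometry — that smoothness of $\Proj(R)$ propagates to the generic fiber — is a routine semicontinuity argument by comparison, and the positivity input is quoted wholesale from \cite{MPSW}.
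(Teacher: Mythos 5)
Your proposal is correct and takes essentially the same route as the paper: set up the flat family $(V, V[x_0,\dots,x_n], \tilde f)$ via Proposition \ref{prop2}, observe $\widetilde M = \coker(\tilde A)$ is $V$-flat with special fiber $M$, apply Theorem \ref{main} to equate the value of $\theta$ at the special and generic fibers, and conclude from the characteristic-zero semi-definiteness result in \cite{MPSW}. The extra steps you flag (checking $V$-flatness of $\widetilde M$ via injectivity of $\tilde A$ and $A$, and passing to algebraic closures before invoking \cite{MPSW}) are a mild and welcome expansion of what the paper states tersely, not a different argument — in particular the detour through $\overline F$ and $\overline{\mathrm{Frac}(V)}$ can be avoided since the unique singular prime of each fiber already has residue field equal to the base field, so $\widetilde\theta$ computes $\theta$ directly.
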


\begin{proof} 
Let $\tilde{M}$ be the graded $V[x_0, \dots, x_n]$-module given by the
cokernel of $\tilde{A}$. Then by Proposition \ref{prop2}, 
$V$, $V[x_0, \dots, x_n]$, and $\tilde{f}$ from a flat family of hypersurfaces
with isolated singularities. 
Moreover, $\tilde{M}$ is a flat $V$-module
and 
hence by Theorem \ref{main} we have
$\tilde{\theta}_\alpha(\tilde{M}, \tilde{M}) = 
\tilde{\theta}_\beta(\tilde{M}, \tilde{M})$ where $\alpha: V \to F$
is the map to the residue field of $B$ and $\beta: V \to L$ is the map
to its field of fractions. Since the only singular prime of $R$,
respectively $R' := L[x_0, \dots, x_n]/\tilde{f}$, has residue field $F$, respectively $L$,
we have
$\tilde{\theta}_\alpha(\tilde{M}, \tilde{M}) = \theta^R(M,N)$ and
$\tilde{\theta}_\beta(\tilde{M}, \tilde{M}) = \theta^{R'}(M,N)$. The
result now follows from \cite[Theorem 3.4]{MPSW}.
 \end{proof}

\begin{eg} \label{example}
Let $F$ be a field of characteristic $p>0$ and let
  $f \in S=F[x_0, \dots, x_n]$ be a homogeneous element such that
  $(x_0, \dots, x_n)$ is the only 
non-smooth prime of 
  $S/f$. If $(A,B)$ is a homogeneous matrix
  factorization for $f$ satisfying the additional constraint that
$\det(A)=f$, then a lifting $(\tilde{A}, \tilde{B})$ as in Corollary 
\ref{C3} always exists. Namely, pick any complete dvr of mixed characteristic $V$ with residue
field $F$ (e.g., the ring of Witt vectors \cite{Bour2}) and let
$\widetilde{A}$ be any matrix
  with coefficients in $V[x_0, \dots, x_n]$ such that $\widetilde{A} \equiv
  A \pmod{\mathfrak{m}_{V}}$. 
Then set 
  $\widetilde{B}=\widetilde{A}^{\textnormal{adj}}$, and we get 
  $\widetilde{A} \cdot \widetilde{B} =\widetilde{f}\cdot I_{r}$ with
$\widetilde{f} \equiv f  \pmod{\mathfrak{m}_{V}}$.
Thus, for such matrix factorizations, we have 
 $\theta(M,M)\geq 0$ if $n \equiv 3
  \pmod{4}$, and $\theta(M,M)\leq 0 $ if $n \equiv 1 \pmod{4}$, where
  $M = \coker(A)$.
\end{eg}

Assume $R$ is a $d$-dimensional local hypersurface that is an isolated singularity.
If $d\geq 4$, then there are no non-free maximal Cohen-Macaulay $R$-modules of rank one. 
(We are grateful to Hailong Dao and Roger Wiegand for explaining this
to us.) For by a theorem of Grothendieck \cite{SGA},
if $R_{p}$ is factorial for all $p\in \Spec(R)$ such that
$\height(p)\leq 3$, then $R$ is factorial. Since a maximal
Cohen-Macaulay module $M$ of rank one 
determines an element in the divisor class group of $R$, $M$ must be
free.  

On the other hand, rank one maximal Cohen-Macaulay modules over
three   dimensional hypersurface singularities exist. For example,  the ideal $(x,y)$
of $R=k[[x, y, z, w]]/(xw-yz)$ is a non-free rank one maximal Cohen-Macaulay module.                                                     
In case $d=3$, Dao and the second author of this paper 
proved that $\theta^{R}(M,M)\geq 0$ for any finitely generated
$R$-module $M$. (See also \cite{Da3}.)

The condition $\det(A) = f$ is equivalent to the condition that $M$ is a rank one maximal Cohen-Macaulay module (cf., for example, \cite[2.2.5]{Hov}).
Therefore the conclusion in Example \ref{example} also follows from the discussion above.

\providecommand{\bysame}{\leavevmode\hbox to3em{\hrulefill}\thinspace}
\providecommand{\MR}{\relax\ifhmode\unskip\space\fi MR }
\providecommand{\MRhref}[2]{%
  \href{http://www.ams.org/mathscinet-getitem?mr=#1}{#2}
}
\providecommand{\href}[2]{#2}

\bibliography{a}
\bibliographystyle{plain}

\end{document}